\numberwithin{equation}{section} % Number equations within sections (i.e. 1.1, 1.2, 2.1, 2.2 instead of 1, 2, 3, 4)
\numberwithin{figure}{section} % Number figures within sections (i.e. 1.1, 1.2, 2.1, 2.2 instead of 1, 2, 3, 4)
\numberwithin{table}{section} % Number tables within sections (i.e. 1.1, 1.2, 2.1, 2.2 instead of 1, 2, 3, 4)
\theoremstyle{definition}
\newtheorem{theorem}{Theorem}[section]
\newtheorem{proposition}[theorem]{Proposition}
\newtheorem{definition}[theorem]{Definition}
\newcommand{\R}{\mathbb R}
\newcommand{\1}{\mathbbm 1}
\newcommand{\Lg}{\mathcal L}
\begin{document}
\title{On Optimal Frame Conditioners}
%\author{
%\IEEEauthorblockN{Chae A. Clark}
%\IEEEauthorblockA{Department of Mathematics\\University of Maryland, College Park\\Email: %cclark18@math.umd.edu}
%\and
%\IEEEauthorblockN{Kasso A. Okoudjou}
%\IEEEauthorblockA{Department of Mathematics\\University of Maryland, College Park\\Email: kasso@math.umd.edu}
%}

% Author 1
\author{Chae Clark}
\address{Department of Mathematics\\University of Maryland, College Park}
\email{cclark18@math.umd}

% Author 2
\author{Kasso A. Okoudjou}
\address{Department of Mathematics\\University of Maryland, College Park}
\email{kasso@math.umd.edu}

\maketitle

%-------------------
% Abstract
%-------------------
\begin{abstract}
A (unit norm) frame is scalable if its vectors can be rescaled so as to result into a tight frame. Tight frames  can be considered optimally conditioned because the condition number of their  frame operators is unity.
In this paper we reformulate the scalability problem  as a convex optimization question. In particular, we present examples of various formulations of the problem along with numerical results obtained by using our methods on randomly generated frames. 
%. along with For example, for high redundancy frames an Augmented Lagrangian scheme is employed, and numerical results are displayed to give a characterization of Gaussian random frames.
\end{abstract}

%-------------------
% Introduction
%-------------------
\section{Frames and scalable frames}
\subsection{Introduction} A finite frame for $\R^{N}$ is a set $\Phi=\{\varphi_{k}\}_{k=1}^{M}\subset\R^{N}$  such that there exist positive constants $0<A\leq B < \infty$ (referred to as the frame bounds) for which
\begin{equation*}\label{eq1}
A\|x\|_{2}^{2} \leq \sum_{k=1}^{M}|\langle x , \varphi_{k} \rangle|^2 \leq B\|x\|_{2}^{2}
\end{equation*}
for all $x\in\R^N$.
Given a frame $\Phi=\{\varphi_{k}\}_{k=1}^{M} \subset \R^N$,  we denote again by $\Phi$ the 
$N\times M$ matrix whose $k^{th}$ column is the vector $\varphi_k$.
The matrix $\Phi$ is the synthesis operator associated to the frame $\Phi$, and its transpose  $\Phi^{T}$ is the analysis operator of $\Phi$.
The frame operator is then defined as $S=\Phi \Phi^{T}.$
When $A=B$ the frame is called tight,  in which case  the frame operator is $S=AI$ where $I$ denotes the $N\times N$ identity matrix. 

% a multiple of the identity $I$,
%\begin{equation*}\label{eq4}
%S = AI.
%\end{equation*}

%-------------------
% Scalable Frames
%-------------------
\subsection{Scalable Frames}
Scalable frames were introduced in \cite{SF2013,SFaCG2013} as a method to convert a non tight frame into a tight one.
More precisely:
\begin{definition}\label{eq6}
Let $M \geq N$ be given.
A frame $\Phi=\{\varphi_k\}_{k=1}^{M} \subset \R^N$ is scalable if there exist a subset $\Phi_J=\{\varphi_k\}_{k\in J}$ with $J\subseteq \{1, 2, \hdots, M\}$, and positive scalars $\{x_k\}_{k\in J}$ such that the system $\widetilde{\Phi}_{J}=\{x_{k}\varphi_k\}_{k\in J}$ is a tight frame for $\R^N$.
\end{definition}

Let $\Phi=\{\varphi_{k}\}_{k=1}^{M} \subset \R^N$ be a frame.
Then the analysis operator of the scaled frame $\{x_{k}\varphi_{k}\}_{k=1}^{M}$ is given by $X\Phi^{T}$, where $X$ is the diagonal matrix with the values $x_{k}$ on its diagonal.
Hence, the frame $\Phi$ is scalable if and only if there exists a diagonal matrix $X=\text{diag}(x_k)$, with $x_k\geq 0$ such that
\begin{equation}\label{eq7}
\widetilde{S} = \Phi X^{T}X \Phi^{T} = \Phi X^2\Phi^T=AI.
\end{equation}
for some constant $A>0$. Without loss of generality we may assume that $A=1$--otherwise replace the diagonal matrix $X^2$ by $Y=X^2/A$.

One can covert ~\eqref{eq7} into a linear system of equations in $M$  unknowns: $x_k^2.$ To write out this linear system we need the following function:  $F:\R^{N}\rightarrow\R^{d}$ given by 
\begin{align*}
F(x) = [F_{0}(x),F_{1}(x),\dots,F_{N-1}(x)]^{T},\\ \\
F_{0}(x) = 
\begin{bmatrix}x_1^2 - x_2^2 \\ x_1^2 - x_3^2 \\ \vdots \\ x_1^2 - x_N^2\end{bmatrix},
F_{k}(x) = 
\begin{bmatrix}x_kx_{k+1} \\ x_kx_{k+2} \\ \vdots \\ x_kx_{N}\end{bmatrix}
\end{align*}
%$$F(x) = 
%\begin{bmatrix}F_{0}(x)\\F_{1}(x)\\\vdots\\F_{N-1}(x)\end{bmatrix}, 
%F_{0}(x) = 
%\begin{bmatrix}x_1^2 - x_2^2 \\ x_1^2 - x_3^2 \\ \vdots \\ x_1^2 - x_N^2\end{bmatrix},
%F_{k}(x) = 
%\begin{bmatrix}x_kx_{k+1} \\ x_kx_{k+2} \\ \vdots \\ x_kx_{N}\end{bmatrix}$$
and $F_{0}(x)\in\R^{N-1}$, $F_{k}(x)\in\R^{N-k}$, $k=1,2,\dots,N-1$, where $d:= \frac{(N-1)(N+2)}{2}$.
Let $F(\Phi)$ be the $d\times M$ matrix  given by $$F(\Phi) = (F(\varphi_{1}) \,\, F(\varphi_{2}) \,\, \dots \,\, F(\varphi_{M})).$$

%Expanding $\Phi X^{2}\Phi^{T}$, we have that since the unknown $X$ is diagonal, the system can be written in a linear form, but before stating the next result, it is necessary to define the following transformation.
%Define the matrix $F(\Phi)$ as:

%$$F(\Phi) = (F(\varphi_{1}) \,\, F(\varphi_{2}) \,\, \dots \,\, F(\varphi_{M})).$$
%where $F:\R^{N}\rightarrow\R^{d}$, $d:= \frac{(N-1)(N+2)}{2}$, is defined by
%$$F(x) = 
%\begin{bmatrix}F_{0}(x)\\F_{1}(x)\\\vdots\\F_{N-1}(x)\end{bmatrix}, 
%F_{0}(x) = 
%\begin{bmatrix}x_1^2 - x_2^2 \\ x_1^2 - x_3^2 \\ \vdots \\ x_1^2 - x_N^2\end{bmatrix},
%F_{k}(x) = 
%\begin{bmatrix}x_kx_{k+1} \\ x_kx_{k+2} \\ \vdots \\ x_kx_{N}\end{bmatrix}$$
%and $F_{0}(x)\in\R^{N-1}$, $F_{k}(x)\in\R^{N-k}$, $k=1,2,\dots,N-1$.

In this setting we have  the following solution to the scalability problem:

\begin{proposition}\label{prop1}\cite[Proposition~3.7]{SFaCG2013}
A frame $\Phi=\{\varphi_k\}_{k=1}^M \subset \R^{N}$  is scalable if and only if there exists a non-negative $u\in\ker F(\Phi)\backslash\{0\}$.
\end{proposition}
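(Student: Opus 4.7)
The plan is to observe that the entries of $F$ are set up so that the linear constraint $F(\Phi)u = 0$ encodes exactly the condition that $\sum_{k=1}^M u_k\varphi_k\varphi_k^T$ is a scalar multiple of the identity. Once this translation is in hand, both directions of the equivalence are essentially automatic, and the real work is the bookkeeping that matches up rows of $F(\Phi)$ with matrix entries.

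Concretely, I would first compute that the $(i,j)$ entry of $S(u) := \sum_{k=1}^{M} u_k \varphi_k\varphi_k^T$ equals $\sum_k u_k \varphi_k(i)\varphi_k(j)$. Requiring $S(u) = cI$ for some scalar $c$ amounts to two families of equations: the off-diagonal entries vanish, giving $\binom{N}{2}$ equations $\sum_k u_k\varphi_k(i)\varphi_k(j) = 0$ for $1\le i<j\le N$, and the diagonal entries are all equal, which can be phrased as $\sum_k u_k(\varphi_k(1)^2-\varphi_k(j)^2)=0$ for $j=2,\dots,N$, giving $N-1$ equations. Reading off the definitions, the rows corresponding to $F_0$ capture the diagonal-equality conditions, and the rows from $F_1,\dots,F_{N-1}$ capture the off-diagonal conditions, for a total of $(N-1)+\frac{N(N-1)}{2}=\frac{(N-1)(N+2)}{2}=d$ constraints. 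Thus $F(\Phi)u=0$ is equivalent to $S(u)=cI$ for some $c\in\R$, and $c$ is automatically non-negative when $u\ge 0$ since $c = \tfrac{1}{N}\sum_k u_k\|\varphi_k\|^2$.

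For the $(\Rightarrow)$ direction, given a scalable $\Phi$ with index set $J$ and positive scalars $\{x_k\}_{k\in J}$, I define $u\in\R^M$ by $u_k = x_k^2$ for $k\in J$ and $u_k=0$ otherwise. Then $u\ge 0$, $u\ne 0$, and by the tight-frame identity \eqref{eq7} the matrix $S(u)$ is a positive multiple of $I$, so $u\in\ker F(\Phi)$. For the $(\Leftarrow)$ direction, given non-negative $u\in\ker F(\Phi)\setminus\{0\}$, the equivalence above yields $S(u)=cI$ with $c\ge 0$. Setting $J=\{k:u_k>0\}$ and $x_k=\sqrt{u_k}$ for $k\in J$ produces $\{x_k\varphi_k\}_{k\in J}$ whose frame operator is $cI$; under the standing assumption that frame vectors are nonzero, $u\ne 0$ forces $c>0$, so this scaled system is a tight frame.

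The one step that needs genuine care, rather than pure bookkeeping, is confirming that the number and arrangement of rows in $F(\Phi)$ really exhaust the conditions for $S(u)=cI$ without redundancy, so that passing between $\ker F(\Phi)$ and $\{u:S(u)\in\R I\}$ loses no information; this is precisely what the count $d=\tfrac{(N-1)(N+2)}{2}$ verifies. Beyond this, the only subtlety is the mild edge case of zero frame vectors, handled by the standard convention that frame vectors are nonzero so that $c>0$ on the reverse implication.
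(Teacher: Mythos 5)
Your proof is correct. The paper itself does not prove this proposition --- it is quoted from \cite[Proposition~3.7]{SFaCG2013} --- but your argument is exactly the intended one: the map $F$ is constructed precisely so that $F(\Phi)u=0$ linearizes the condition $\Phi\,\mathrm{diag}(u)\,\Phi^{T}=cI$ (the $F_0$ rows forcing equal diagonal entries, the $F_k$ rows killing the off-diagonal entries), after which both implications follow by taking $u_k=x_k^2$ on $J$ and zero elsewhere, respectively $x_k=\sqrt{u_k}$ on $J=\{k:u_k>0\}$. You also correctly identify the only real caveat, namely that nonzero (e.g.\ unit-norm, as in \cite{SFaCG2013}) frame vectors are needed to guarantee $c=\tfrac1N\sum_k u_k\|\varphi_k\|^2>0$ in the reverse direction.
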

%The proof of this result is presented in \cite{SFaCG2013}.
%Ultimately, this is just an expansion of the constraint of the frame operator.
%
%-------------------
% Mathematical Programming and Duality
%-------------------
\subsection{Mathematical Programming and Duality}
Our main goal is to find a non-negative nontrivial vector in the null space of $F(\Phi)$ using some optimization methods. For this reason we recall some notions from 
%In this section we present some standard results in
duality theory in mathematical programming.
For a more robust treatment of duality theory applied to linear programs, we refer to the standard texts by S. Boyd, L. Vandenberghe \cite{CO2004} and D. Bertsimas, J. Tsitsiklis \cite{ItLO1997}.
Recall that the Primal and Dual linear mathematical programming problems are defined, respectively, as follows:
\begin{align*}
\text{minimize: }& c^{T}x\\
\text{subject to: } & Ax=b\\
                    & x\succeq0.
\end{align*}
\begin{align*}
\text{maximize: }&b^{T}y\\
\text{subject to: }&A^{T}y \leq c \\
&y\in\R^{N}.
\end{align*}

\begin{theorem}[Strong Duality]
If either the primal or dual problem has a finite optimal value, then so does the other.
The optimal values coincide, and optimal solutions to both the primal and dual problems exist.
\end{theorem}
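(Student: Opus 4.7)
The plan is to prove this classical LP strong duality theorem in three steps, reducing the substantive content to Farkas' lemma.

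First I would establish weak duality: for any primal-feasible $x$ and dual-feasible $y$,
\[
b^T y = (Ax)^T y = x^T A^T y \le x^T c = c^T x,
\]
using $A^T y \le c$ componentwise together with $x \succeq 0$. Thus every dual value is a lower bound on every primal value, so finiteness of one objective forces the other to be bounded in the corresponding direction.

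Next, assume the primal has finite optimal value $z^\ast$. I would apply Farkas' lemma to the system $Ax = b$, $c^T x \le z^\ast - \varepsilon$, $x \succeq 0$, which is infeasible for every $\varepsilon > 0$ by definition of $z^\ast$. Farkas delivers a multiplier pair $(y,\mu)$ with $\mu \ge 0$, $A^T y - \mu c \le 0$, and $b^T y - \mu(z^\ast - \varepsilon) > 0$. A short argument (using nonemptiness of the primal feasible set) rules out $\mu = 0$, so rescaling to $\mu = 1$ yields a dual-feasible $y$ with $b^T y > z^\ast - \varepsilon$. Letting $\varepsilon \to 0$ and invoking weak duality gives $\sup b^T y = z^\ast$; attainment of the supremum follows because the dual feasible set is a closed polyhedron on which the linear objective is bounded above, hence has a maximizer (reduction to an extreme point of the face of optima, or an argument via the recession cone). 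The symmetric case in which the dual has finite optimal value is handled by rewriting the dual in primal standard form (slack variables $s = c - A^T y \succeq 0$ together with the splitting $y = y^+ - y^-$, $y^\pm \succeq 0$) and applying the argument just given.

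The substantive obstacle is the invocation of Farkas' lemma, whose proof is the theorem of the alternative for linear inequalities: an infeasible finite linear system admits a nonnegative combination of its constraints certifying inconsistency. This ultimately rests on the fact that the image $\{Ax : x \succeq 0\}$ of the nonnegative orthant is a closed polyhedral cone, so any point outside it can be strictly separated by a hyperplane. Once Farkas is available, the remainder of the argument is bookkeeping plus the compactness consideration needed to pass from supremum to maximum.
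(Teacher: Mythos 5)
The paper does not prove this statement at all: it is quoted as a classical background result (``Strong Duality'') with references to the standard texts of Boyd--Vandenberghe and Bertsimas--Tsitsiklis, and is then only used to justify the dual formulation $\mathcal{P}_2$ and the remark that the dual always has the feasible point $w=\min\{a\}$, $v=0$. So there is no in-paper argument to compare against; what you have written is the standard textbook proof, and in outline it is sound: weak duality, then the theorem of the alternative applied to the infeasible system $Ax=b$, $c^Tx\le z^\ast-\varepsilon$, $x\succeq 0$, with $\mu=0$ excluded exactly as you say (a feasible $x$ gives $b^Ty=x^TA^Ty\le 0$, contradicting $b^Ty>0$). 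Two points would need tightening in a full write-up. First, you need the version of Farkas for a mixed equality/inequality system (or convert to standard form first); this is routine but should be stated. Second, the theorem claims existence of optimal solutions to \emph{both} problems, and your attainment argument is only spelled out for the dual; the primal needs the same fact --- a linear function bounded below on a nonempty polyhedron attains its minimum --- which is itself a nontrivial polyhedral result (extreme-point or recession-cone argument) rather than a compactness fact, since neither feasible set here is compact in general. With those two items made explicit, your reduction of the whole statement to Farkas' lemma plus polyhedral attainment is exactly the argument the cited texts give.
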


\begin{theorem}[Complimentary Slackness]
Let $x^{*}$ and $y^{*}$ be feasible solutions to the primal and dual problems respectively.
Let $A$ be an $N$ by $M$ matrix, where $A_{j}$ denotes the $j$th column and $a_{i}$ denotes the $i$th row of $A$.
Then $x^{*}$ and $y^{*}$ are optimal solutions to their respective problems if and only if
\begin{equation*}
y_{i}(a_{i}\cdot x - b_{i}) = 0 \text{ for all }i=1,\dots,N,
\end{equation*}
and
\begin{equation*}
x_{i}(c_{j} - y^{T}A_{j}) = 0 \text{ for all }j=1,\dots,M.
\end{equation*}
\end{theorem}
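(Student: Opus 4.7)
The plan is to deduce complementary slackness by decomposing the primal--dual duality gap $c^Tx - b^Ty$ into a sum in which each summand is a product of two componentwise nonnegative quantities, and then reading off the conditions under which each product can vanish.

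First I would establish the identity
\begin{equation*}
c^T x - b^T y \;=\; \sum_{j=1}^{M} x_j\bigl(c_j - y^T A_j\bigr) \;+\; \sum_{i=1}^{N} y_i\bigl(a_i\cdot x - b_i\bigr),
\end{equation*}
obtained by expanding the scalar $y^T A x$ in two equivalent ways (once as $(A^T y)^T x$, once as $y^T(Ax)$) and regrouping against $c^T x$ and $b^T y$. For any feasible pair $(x,y)$ the second sum vanishes outright by primal feasibility ($Ax=b$), and by dual feasibility $c - A^T y \succeq 0$, so together with $x\succeq 0$ the first sum is a sum of products of nonnegative scalars. This already yields weak duality $c^T x \geq b^T y$ as a by-product.

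Next I would invoke the Strong Duality Theorem stated just above to link optimality with the vanishing of the gap. If $x^*$ and $y^*$ are optimal, then the optimal values coincide, so $c^T x^* = b^T y^* $, which forces the first sum to be zero; a sum of nonnegative products can vanish only if each summand is zero, giving $x_j^*\bigl(c_j - (y^*)^T A_j\bigr)=0$ for all $j$. The companion family $y_i^*(a_i\cdot x^* - b_i)=0$ is automatic from primal feasibility. Conversely, if both families hold, the displayed identity immediately forces $c^T x^* = b^T y^*$, and weak duality then promotes the pair to optimal solutions of the primal and dual, respectively.

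The main obstacle is really just bookkeeping rather than any substantive step: one must track indices so that the decomposition matches the row/column notation ($a_i$ versus $A_j$) used in the statement, and must note that for this equality-constrained primal the first family of conditions is tautological but is nevertheless listed to keep the theorem symmetric with its more general inequality-constraint counterpart. The conceptual content is fully carried by the one-line gap identity together with the sign information from feasibility.
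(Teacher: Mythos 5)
Your argument is correct: the duality-gap identity $c^Tx - b^Ty = \sum_j x_j(c_j - y^TA_j) + \sum_i y_i(a_i\cdot x - b_i)$, combined with the sign information from feasibility, strong duality for the forward direction, and weak duality for the converse, is exactly the standard proof of complementary slackness for this primal--dual pair, and your observation that the first family of conditions is automatic under the equality constraints $Ax=b$ is also right. The paper itself gives no proof of this theorem — it is recalled from the cited texts of Boyd--Vandenberghe and Bertsimas--Tsitsiklis — and your argument matches the one found there, so there is nothing to flag beyond the paper's own typo of writing $x$ for $x^*$ in the statement.
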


%-------------------
% Convex Geometry and Optimization
%-------------------
\section{Reformulation of the Scalability Problem as an Optimization Problem}
This section establishes the equivalence of generating a scaling matrix $X$ and solving an optimization problem of a generic  convex objective function.
More specifically, we shall phrase the scalability problem as a linear and convex programming problem.

First consider the sets $\mathcal{S}_{1}$ and $\mathcal{S}_{2}$ given by 
$$\mathcal{S}_{1} := \{u\in\R^{M} \,|\, F(\Phi)u=0 \,,\, u\succeq0 \,,\, u\neq0\},$$
and
$$\mathcal{S}_{2} := \{v\in\R^{M} \,|\, F(\Phi)v=0 \,,\, v\succeq0 \,,\, \|v\|_{1}=1\}.$$
$\mathcal{S}_{1}$ is a subset of the null space of $F(\Phi)$, and  each $u \in \mathcal{S}_1$ is associated a scaling matrix $X_{u}$, defined as
$$X_{u} := (X_{ij})_{u} = \left\{\begin{matrix}\sqrt{u_{i}} & \text{ if } i=j \\ 0 & \text{ otherwise.}\end{matrix}\right.$$
$\mathcal{S}_{2}\subset \mathcal{S}_{1}\cap B_{\ell^{1}}$  where $B_{\ell^1}$ is the unit ball under the $\ell^1$ norm. %has a similar interpretation, except with the added condition that the solution $v$ lie on the $\ell_{1}$ unit ball. As a brief aside, the restriction that $v$ be on the $\ell_{1}$ ball corresponds to the diagonal of $X_{v}$ being on the $\ell_{2}$ ball.

%\begin{lemma}\label{lem1}
We observe that  a frame $\Phi =\{\varphi_{k}\}_{k=1}^{M}\subset\R^{N}$ is scalable if and only if 
%Also define the spaces $\mathcal{S}_{1}$ and ${S}_{2}$ as above.
%Then there exists a surjective mapping $G:\mathcal{S}_{1}\rightarrow\mathcal{S}_{2}$.
there exists a scaling matrix $X_{u}$ with $u\in\mathcal{S}_{1}$. Consequently, one can associate to $X_u$  a scaling matrix $X_{v}$ with $v\in\mathcal{S}_{2}$.
%\end{lemma}
%\begin{proof}
%Let $u$ be in the set $\mathcal{S}_{1}$.
%This admits a scaling matrix $X_{u}$.
%As $\mathcal{S}_{1}$ doesn't contain the zero vector, we shall set $\|u\|_{1} = L$, for $L>0$.
%Define $v$ to be $\frac{1}{L}u$.
%Then $\|v\|_{1}=1$, which implies that $v\in\mathcal{S}_{2}$. It is clear that $v$ is also in $\mathcal{S}_{1}$, which admits a scaling matrix $X_{v}$.
%\end{proof}
The normalized set $\mathcal{S}_{2}$ ensures that the constraints in the optimization problems to be presented are convex.
%It is also necessary to note that since the elements of $u\in\mathcal{S}_{2}$ are non-negative, the unit $\ell^{1}$ norm constraint can be written as the sum of the elements of $u$.
%This allows the sets $\mathcal{S}_{1}$ and $\mathcal{S}_{2}$ to be viewed as blunt cones and blunt convex cones respectively.
%The latter of which can be phrased in the language of convex optimization.
%We shall now phrase this problem in the language of convex optimization. 

\begin{theorem}\label{thm1}
Let $\Phi =\{\varphi_{k}\}_{k=1}^{M}\subset\R^{N}$ be a frame, and let $f:\R^{M}\rightarrow\R$ be a convex function.
Then the program
\begin{align}\label{thm1_1}
(\mathcal{P}) \,\text{minimize:} &\, f(u)\\
              \text{subject to:} &\, F(\Phi)u=0\notag\\
                        & \|u\|_{1}=1\notag\\
                        & u \succeq0\notag
\end{align}
has a solution if and only if the frame $\Phi$ is scalable.
\end{theorem}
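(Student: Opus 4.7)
The plan is to reduce the theorem to showing two things: first, that the feasible set of $(\mathcal{P})$ is nonempty precisely when $\Phi$ is scalable, and second, that whenever it is nonempty the convex objective $f$ actually attains its infimum there. The key observation is that the feasible set is exactly the normalized set $\mathcal{S}_{2}$ introduced just above the theorem.

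For the forward direction, suppose $(\mathcal{P})$ admits an optimal solution $u^{*}$. Then in particular $u^{*}$ is feasible, so $F(\Phi)u^{*}=0$, $u^{*}\succeq 0$, and $\|u^{*}\|_{1}=1$; in particular $u^{*}\neq 0$. Thus $u^{*}\in \ker F(\Phi)\setminus\{0\}$ is nonnegative, and Proposition~\ref{prop1} immediately implies that $\Phi$ is scalable.

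For the converse, assume $\Phi$ is scalable. By Proposition~\ref{prop1} there exists $w\in\ker F(\Phi)\setminus\{0\}$ with $w\succeq 0$. Because $w\succeq 0$ and $w\neq 0$, we have $\|w\|_{1}>0$, so $v:=w/\|w\|_{1}$ lies in $\mathcal{S}_{2}$; the feasible set of $(\mathcal{P})$ is thus nonempty. Now observe that $\mathcal{S}_{2}$ is closed (it is the intersection of the linear subspace $\ker F(\Phi)$, the closed affine hyperplane $\{u:\|u\|_{1}=1\}$ on the nonnegative orthant, and $\{u:u\succeq 0\}$) and bounded (since $u\succeq 0$ and $\|u\|_{1}=1$ forces $0\le u_{k}\le 1$, so $\mathcal{S}_{2}$ sits inside the standard simplex). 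Therefore $\mathcal{S}_{2}$ is compact.

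The main technical point is then to invoke continuity of $f$ on the compact set $\mathcal{S}_{2}$ and apply the Weierstrass extreme value theorem. A real-valued convex function $f:\R^{M}\to\R$ is automatically continuous on all of $\R^{M}$ (convex functions are continuous on the relative interior of their effective domain, and here the domain is the whole space). Hence $f$ attains its minimum on $\mathcal{S}_{2}$, yielding an optimal $u^{*}$ and completing the proof. The only place where one needs to be careful is that $f$ is tacitly taken to be finite-valued; if one wished to allow extended-real-valued convex functions, one would need to additionally assume lower semicontinuity and that $f$ is not identically $+\infty$ on $\mathcal{S}_{2}$, after which the same compactness argument applies.
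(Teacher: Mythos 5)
Your proof is correct and follows essentially the same route as the paper: feasibility of $(\mathcal{P})$ is equivalent to scalability via Proposition~\ref{prop1} (normalizing a nonnegative null vector into $\mathcal{S}_{2}$), and a minimizer then exists. In fact you make rigorous the step the paper only sketches ("the constraints are convex and bounded"), by observing that the feasible set is compact, sitting inside the standard simplex, and that a finite-valued convex $f$ on $\R^{M}$ is continuous, so Weierstrass applies.
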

\begin{proof}
Any feasible solution $u^{*}$ of $\mathcal{P}$ is contained in the set $\mathcal{S}_{2}$, which itself is contained in $\mathcal{S}_{1}$, and thus corresponds to a scaling matrix $X_u$. 
%This is the set of all possible $u$ such that the element-wise square root produces a scaling of $\Phi$, as detailed at the start of this section.

Conversely, any $u\in\mathcal{S}_{1}$ can be mapped to a $v\in\mathcal{S}_{2}$ by appropriate scaling factor.
This provides an initial feasible solution to $\mathcal{P}$, and as $f$ is convex and the constraints are convex and bounded, there must exist a minimizer of $\mathcal{P}$.
\end{proof}
Theorem~\ref{thm1} is very general in that the convex objective function $f$ can be chosen so as the resulting frame has some desirable properties. We now consider certain interesting examples of objective functions $f$.
These examples can be  related to the sparsity (or lack thereof) of the desired solution. 
Using a linear objective function promotes sparsity, while barrier objectives promote dense solutions (small number of zero elements in $u$).

%******************************************************************************
\subsection{Linear Program Formulation}
%******************************************************************************
Assume that the objective function in \eqref{thm1_1} is given by 
$f(u) := a^{T}u$
for some coefficient vector $a\in\mathbb{R}^{M}\backslash \{0\}$.
Our program $\mathcal{P}$ now becomes

\begin{align}\label{GDC_1}
(\mathcal{P}_{1}) \text{ minimize: }& a^{T}u\\
              \text{subject to: } & F(\Phi)u=0\notag\\
                                  & \|u\|_{1}=1\notag\\
                                  & u\succeq0\notag.
\end{align}

Choosing the coefficients $a$  independently of the variables $u$, results in a linear program.
For example, the choice  $a_{i} = 1$ for all $i$ result in a program to minimize the $\ell^{1}$ norm of $u$.
Another, more useful choice of coefficients is $a_{i} = \frac{1}{\|F(\varphi_{i})\|_{2}}$.
Under this regime, a higher weight is given to the frame elements with smaller norm (which further encourages sparsity).

One of the advantages of linear programs is that they admit a strong dual formulation. To the  primal problem
$\mathcal{P}_{1}$ corresponds  the following  dual problem $\mathcal{P}_{2}$.

\begin{proposition}\label{GDC3}
Let $\Phi =\{\varphi_{k}\}_{k=1}^{M}\subset\R^{N}$ be a frame.
The program
\begin{align*}
  (\mathcal{P}_{2})\text{ maximize: }& w\\
\text{subject to: } & [F(\Phi)^{T} \,\, \mathbbm{1}]\begin{bmatrix}v \\ w\end{bmatrix} \leq a\\
                    & w\in\mathbb{R} \,,\, v\in\mathbb{R}^{d}
\end{align*}
is the strong dual of $\mathcal{P}_{1}$.
\end{proposition}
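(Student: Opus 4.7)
The plan is to massage $\mathcal{P}_{1}$ into the standard primal form displayed in Subsection~1.3, then read off the dual directly and compare with $\mathcal{P}_{2}$. The only subtle point is the $\ell^{1}$ constraint, but since $u \succeq 0$ is imposed, $\|u\|_{1} = \mathbbm{1}^{T} u$, so $\|u\|_{1}=1$ is just the linear equality $\mathbbm{1}^{T}u = 1$. Stacking this with $F(\Phi)u=0$ gives the single equality constraint
\begin{equation*}
\widetilde{A}\,u \;=\; \widetilde{b}, \qquad
\widetilde{A} := \begin{bmatrix} F(\Phi) \\ \mathbbm{1}^{T} \end{bmatrix} \in \R^{(d+1)\times M}, \qquad
\widetilde{b} := \begin{bmatrix} 0 \\ 1 \end{bmatrix} \in \R^{d+1},
\end{equation*}
so $\mathcal{P}_{1}$ is literally of the form ``minimize $a^{T}u$ subject to $\widetilde{A}u = \widetilde{b}$, $u \succeq 0$.''

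Next I would apply the generic primal/dual template of Subsection~1.3 with cost vector $a$, constraint matrix $\widetilde{A}$, and right-hand side $\widetilde{b}$. Splitting the dual variable as $y = \begin{bmatrix} v \\ w \end{bmatrix}$ with $v \in \R^{d}$ and $w \in \R$, the dual objective is $\widetilde{b}^{T}y = w$, and the dual constraint $\widetilde{A}^{T}y \leq a$ reads
\begin{equation*}
[F(\Phi)^{T} \;\; \mathbbm{1}]\begin{bmatrix} v \\ w \end{bmatrix} \;\leq\; a.
\end{equation*}
This is exactly $\mathcal{P}_{2}$, which establishes that $\mathcal{P}_{2}$ is the dual of $\mathcal{P}_{1}$.

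For the ``strong'' part I would invoke the Strong Duality Theorem quoted in Subsection~1.3. Its hypothesis requires a finite optimal value in either program. If $\Phi$ is scalable, Theorem~\ref{thm1} guarantees $\mathcal{S}_{2} \neq \emptyset$, so $\mathcal{P}_{1}$ is feasible; moreover the feasible set lies in the $\ell^{1}$-unit sphere intersected with the nonnegative orthant, hence is compact, so the linear objective $a^{T}u$ attains a finite minimum. If $\Phi$ is not scalable, then $\mathcal{P}_{1}$ is infeasible and duality theory yields that $\mathcal{P}_{2}$ is either infeasible or unbounded, so neither program has a finite optimum. In either case the hypothesis of the Strong Duality Theorem is satisfied or vacuous, and we conclude that $\mathcal{P}_{2}$ is the \emph{strong} dual of $\mathcal{P}_{1}$.

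The main obstacle is really only bookkeeping: one must be careful that the $\ell^{1}$ constraint linearizes \emph{because of} the sign constraint $u \succeq 0$, since otherwise $\|u\|_{1}=1$ is nonconvex and the LP duality framework would not apply directly. Once this observation is in place, the rest is a mechanical application of the template in Subsection~1.3.
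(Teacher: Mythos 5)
Your proposal is correct and takes essentially the same route as the paper: the paper likewise rewrites the $\ell^{1}$ constraint as the linear equality $\sum_{i}u_{i}=1$ (valid because $u\succeq 0$), stacks it with $F(\Phi)u=0$, and reads off the dual from the standard primal/dual LP template of Subsection~1.3. Your additional verification of the strong-duality hypothesis (compactness of the feasible set when $\Phi$ is scalable, and the infeasible/unbounded dichotomy otherwise) is a detail the paper leaves implicit but does not change the argument.
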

\begin{proof}
This result follows exactly from the construction of dual formulations for linear programs.
The primal problem can be formulated as follows:
%\begin{align*}
%  \text{ minimize: }& a^{T}u\\
%\text{subject to: } & F(\Phi)u=0\\
%                    & \|u\|_{1}=1\\
%                    & u\succeq0.
%\end{align*}
%can be formulated as follows
\begin{align*}
\text{minimize: }&\sum_{i=1}^{M}a_{i}u_{i}\\
\text{subject to: }&F(\Phi)u=0\\ 
&\sum_{i=1}^{M}u_{i}=1\\
&u\succeq0.
\end{align*}
The strong dual of this problem is:
\begin{align*}
\text{maximize: }&w\\
\text{subject to: }&[F(\Phi)^{T} \,\, \mathbbm{1}]\begin{bmatrix}v \\ w\end{bmatrix}\leq a.
\end{align*}
\end{proof}

Numerical optimization schemes, in many cases, consist of a search for an initial feasible solution, and then a search for an optimal solution.
In analyzing the linear program formulation $\mathcal{P}_{1}$, we notice that we either have an optimal solution or the problem is infeasible, but there is no case when the problem is unbounded (due to the bounding constraint $\|u\|_{1}=1$).

The dual problem has the property that it either has an optimal solution, or is unbounded (from duality).
Consequently,  for any frame $\Phi$, $w=\min\{a\}$ and $v=0$ is always a feasible solution to the dual problem. This removes the requirement that an initial solution be found \cite{ItLO1997}.

%there will always be a starting feasible solution to the dual problem ($w=\min\{a\}$ and $v=0$ is feasible for any frame $\Phi$).
%This removes the requirement that an initial solution be found \cite{ItLO1997}.

%******************************************************************************
\subsection{Barrier Formulations}
%******************************************************************************
A sparse solution to the linear program produces a frame in which the frame elements corresponding to the zero coefficients are removed.
In contrast, one may wish to have a full solution, that is, one may want to retain all of, or most of, the frame vectors. To enforce this property, we  use a barrier objective.

%This situation arises when all of, or most of, the frame information must be retained.
%To enforce this property, we shall use a barrier objective instead of a linear one.

%to the previous results in this section, it is sometimes desirable to have a full solution.
%This situation arises when all of, or most of, the frame information must be retained.
%To enforce this property, we shall use a barrier objective instead of a linear one.

\begin{proposition}\label{GDC4}
Let $\Phi =\{\varphi_{k}\}_{k=1}^{M}\subset\R^{N}$ be a frame, and define $0\leq\epsilon\ll1$.
If the problem
\begin{align}\label{GDC4_1}
(\mathcal{P}_{3}) \text{ maximize: }&\sum_{i=1}^{M}\ln(u_{i}+\epsilon)\\
\text{subject to: }&F(\Phi)u=0\notag\\ 
&\|u\|_{1}=1\notag\\
&u\geq0.\notag
\end{align}
has a feasible solution $u^{*}$ with a finite objective function value, then the frame $\Phi$ is scalable, and the scaling 
matrix $X$ is a diagonal operator where the elements are the square-roots of the feasible solution $u^{*}$.
Moreover, for $\epsilon=0$, if a solution $u^{*}$ exists, all elements of $u^{*}$ are strictly positive.
\end{proposition}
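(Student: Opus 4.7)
The plan is to reduce the first statement to a direct application of Proposition \ref{prop1}, and then to exploit the blow-up of $\ln$ at $0$ to get the positivity in the $\epsilon=0$ case.

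First, I would observe that the feasible region of $\mathcal{P}_3$ is exactly the set $\mathcal{S}_2$ introduced in Section 2: the constraints $F(\Phi)u = 0$, $\|u\|_1 = 1$, and $u \succeq 0$ are precisely the defining conditions of $\mathcal{S}_2$. Hence any feasible $u^*$ (in particular any maximizer with finite objective value) lies in $\mathcal{S}_2 \subset \mathcal{S}_1$, and so is a nonzero nonnegative element of $\ker F(\Phi)$. By Proposition \ref{prop1}, this is equivalent to $\Phi$ being scalable, and by the construction recalled in Section 2, the scaling matrix is precisely $X_{u^*} = \mathrm{diag}(\sqrt{u_i^*})$, which gives the first claim.

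For the second claim, suppose $\epsilon = 0$. The objective now reads $\sum_{i=1}^M \ln(u_i)$, which takes the value $-\infty$ at any feasible point having at least one zero coordinate. If $u^*$ is an optimal solution of $\mathcal{P}_3$ in the meaningful sense (i.e., the optimal value is finite, so that the problem is not degenerate on $\mathcal{S}_2$), then necessarily $\sum_i \ln(u_i^*) > -\infty$, forcing $u_i^* > 0$ for all $i = 1, \dots, M$. Conversely, if every feasible point had some zero coordinate, no $u^*$ could achieve a finite objective, so the existence hypothesis would fail.

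There is no significant obstacle here; the statement is essentially a repackaging of the definitions together with Proposition \ref{prop1}, with the only genuine content being the observation that $\ln$ blows up at $0$, which forces any properly optimal solution to lie in the relative interior of the simplex cut out by the remaining constraints. The only place where one should be slightly careful is to distinguish "solution with finite objective value" from "feasible point," since in the $\epsilon = 0$ case a feasible point on the boundary of the nonnegative orthant is still feasible but yields an objective value of $-\infty$.
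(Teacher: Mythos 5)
Your proposal is correct and follows essentially the same route as the paper: the paper deduces scalability by citing Theorem~\ref{thm1} (whose proof rests on the identification of the feasible set with $\mathcal{S}_2\subset\mathcal{S}_1$ and hence on Proposition~\ref{prop1}, which you invoke directly), and its argument for the $\epsilon=0$ case is exactly your observation that a zero coordinate forces the objective to $-\infty$, contradicting finiteness.
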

\begin{proof}
Assume $u^{*}$ is a feasible solution to \eqref{GDC4_1} with $0<\epsilon\ll1$ and the objective function finite.
Then from Theorem \ref{thm1}, we have that the frame $\Phi$ is scalable.
Now assume $\epsilon=0$.
If one of the variables $u_{i}$ were zero, then the objective function would have a value of $-\infty$.
Since we assume the function is finite, this cannot be the case.
A negative value for $u_{i}$ would result in the objective function value being undefined, this also cannot be the case due to the finite objective.
Therefore, $u_{i}$ must be positive for all $i$.
\end{proof}

\noindent An alternative barrier is the maximin objective.

\begin{proposition}\label{GDC5}
Let $\Phi =\{\varphi_{k}\}_{k=1}^{M}\subset\R^{N}$ be a frame.
If the problem
\begin{align}\label{GDC5_1}
(\mathcal{P}_{4}) \text{ maximize: }&\min_{i=1,\dots,M}\{u_{i}\}\\
\text{subject to: }&F(\Phi)u=0\notag\\ 
&\|u\|_{1}=1\notag\\
&u\geq0.\notag
\end{align}
has a feasible solution $u^{*}$ with a finite objective function value, then the frame $\Phi$ is scalable, and the scaling 
matrix $X$ is a diagonal operator where the elements are the square-roots of the feasible solution $u^{*}$.
Moreover, a solution exists with positive elements if and only if the solution produced by solving this problem has positive elements.
\end{proposition}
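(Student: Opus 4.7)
The plan is to derive both claims of Proposition~\ref{GDC5} from Theorem~\ref{thm1} together with an easy compactness/continuity argument, mirroring the structure of Proposition~\ref{GDC4} but exploiting the fact that the maximin objective directly measures positivity of the smallest coordinate.

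For the first assertion, I would observe that any feasible point $u^{*}$ of $\mathcal{P}_{4}$ satisfies $F(\Phi)u^{*}=0$, $u^{*}\succeq 0$, and $\|u^{*}\|_{1}=1$, and hence lies in $\mathcal{S}_{2}\subset\mathcal{S}_{1}$. By the construction preceding Theorem~\ref{thm1}, the associated diagonal matrix $X_{u^{*}}=\text{diag}(\sqrt{u^{*}_{i}})$ satisfies $\Phi X_{u^{*}}^{2}\Phi^{T}=I$, so $\Phi$ is scalable with this scaling. I would also remark that finiteness of the objective is automatic here: on the feasible set each $u_{i}$ lies in $[0,1]$, so $\min_{i}u_{i}\in[0,1/M]$ is always finite; the content of the hypothesis is really just the existence of a feasible point.

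For the ``moreover'' clause, one direction is immediate: if the optimizer produced by $\mathcal{P}_{4}$ has all positive entries, then it is itself a solution in $\mathcal{S}_{2}$ with positive elements. For the converse, suppose there exists any $v\in\mathcal{S}_{2}$ with $v_{i}>0$ for all $i$, so that $\min_{i}v_{i}>0$. The feasible set $\mathcal{S}_{2}$ is the intersection of the affine subspace $\ker F(\Phi)$ with the $\ell^{1}$-unit simplex in the non-negative orthant, and is therefore closed and bounded, hence compact in $\R^{M}$. The objective $u\mapsto\min_{i}u_{i}$ is continuous, so a maximizer $u^{*}$ exists and satisfies $\min_{i}u^{*}_{i}\geq\min_{i}v_{i}>0$. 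Thus every coordinate of $u^{*}$ is strictly positive.

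I do not anticipate a serious obstacle; the only point requiring care is recognizing that the maximin objective directly encodes the positivity property being characterized, so that the biconditional reduces to a compactness-plus-continuity argument. The maximin formulation can be rewritten as the linear program of maximizing $t$ subject to $u_{i}\geq t$ for all $i$ together with the constraints of $\mathcal{P}$, which makes the continuity and existence of optimizers transparent if one prefers to avoid invoking compactness directly.
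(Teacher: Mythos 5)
Your proof is correct, and its core idea coincides with the paper's: the maximin objective directly reports the smallest coordinate of $u$, so the ``moreover'' biconditional reduces to knowing that a global maximizer of $\mathcal{P}_{4}$ exists and comparing its objective value to zero. The difference is in how that existence is justified. The paper introduces the auxiliary variable $t$, rewrites $\mathcal{P}_{4}$ as a linear program, and simply asserts that this LP ``can be solved to optimality'' and that the optimum is global by convexity; you instead observe that the feasible set $\mathcal{S}_{2}$ is compact and the objective $u\mapsto\min_{i}u_{i}$ is continuous, so a maximizer exists outright. Your route is the more careful one: it supplies the missing existence argument, it explicitly handles the first assertion (scalability of $\Phi$ from feasibility, via $\mathcal{S}_{2}\subset\mathcal{S}_{1}$ and Theorem~\ref{thm1}), and it correctly notes that finiteness of the objective is automatic on the simplex. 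It also sidesteps a small defect in the paper's LP reformulation, which includes the strict constraint $t>0$ — as written that would render the LP infeasible exactly in the case the ``moreover'' clause is meant to detect (optimal value $0$); your version, with $t$ free or $t\leq u_{i}$ only, is the right formulation. The only nitpick is that a feasible $u^{*}\in\mathcal{S}_{1}$ gives $\Phi X_{u^{*}}^{2}\Phi^{T}=AI$ for some $A>0$ rather than $=I$ exactly, but the paper itself normalizes this away without comment.
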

\begin{proof}
To show this, we shall rewrite this problem as a linear program.
\begin{align}
 \text{ maximize: }& t\\
\text{subject to: }& F(\Phi)u=0\notag\\
& \sum_{i=1}^{M}u_{i}=1\notag\\
& t\leq u_{i}\notag\\
& t>0 \,,\, u\succeq0\notag.
\end{align}
Here, $t$ is an auxiliary variable, taken to be the minimum element of $u$.
This linear program can be solved to optimality.
Moreover, as this problem is convex, the optimum achieved is global.
If the objective function at optimality has a value of 0, then there can exist no solution with all positive coefficients.
\end{proof}

%-------------------
% Augmented Lagrangian Methods
%-------------------
\section{Augmented Lagrangian Method}
To efficiently solve these convex formulations, we employ the method of augmented Lagrangians. Rewriting norms as matrix/vector products we are interested in 
%
%
%Starting with an $\ell^{2}$ norm objective,
%\begin{align*}
%\text{ minimize: }&\, \|u\|^{2}_{2}\\
%              \text{subject to: } &\, F(\Phi)u=0\\
%                        &\, \|u\|_{1}=1\\
%                        &\, u \succeq0,
%\end{align*}
%we first rewrite the norms as matrix/vector products,
\begin{align*}
\text{ minimize: }&\, u^{T}Iu\\
              \text{subject to: } &\, F(\Phi)u=0\\
                        &\, \1^{T}u=1\\
                        &\, u \succeq0.
\end{align*}
For notational convenience, we denote $L$ and $b$ to be
\begin{align*}
\begin{bmatrix}F(\Phi)\\\1^{T} \end{bmatrix} \hspace{1em} &\text{and} \hspace{1em}\begin{bmatrix}0\\ 1\end{bmatrix}
\end{align*}
respectively.
The $\ell^{2}$ problem is now
\begin{align*}
\text{ minimize: }&\, u^{T}Iu\\
              \text{subject to: } &\, Lu=b\notag\\
                        &\, u \succeq0.\notag
\end{align*}
To solve this problem, the augmented Lagrangian $\Lg$ is formed,
\begin{align*}
\Lg =&\, u^{T}Iu + \langle\mu , Lu-b\rangle + \dfrac{\lambda}{2}\|Lu - b\|^{2}_{2}\\
%=&\, u^{T}Iu + \mu^{T}Lu - \mu^{T}b + \dfrac{\lambda}{2}\langle Lu - b,Lu - b \rangle\\
=&\, u^{T}Iu + \mu^{T}Lu - \mu^{T}b + \dfrac{\lambda}{2}(u^{T}L^{T}Lu - 2u^{T}L^{T}b + b^{T}b).
\end{align*}
This function is minimized through satisfying the first-order condition, $\nabla \Lg = 0$.

The gradient of the Lagrangian with respect to $u$ is solved through standard calculus-based methods.
The Lagrangian with respect to the dual variables $\mu$ and $\lambda$ is linear, which we optimize through gradient descent.
The tuning parameter $\eta$ denotes the scaling of the descent direction.

\begin{align*}
\nabla_{u}\Lg =&\, 2Iu + L^{T}\mu + \lambda L^{T}Lu - \lambda L^{T}b.\\
    0 =&\, (2I + \lambda L^{T}L)u + L^{T}\mu - \lambda L^{T}b.\\
    (2I + \lambda L^{T}L)u =&\,  \lambda L^{T}b - L^{T}\mu.
\end{align*}
Dividing the equation by $\lambda$, we have
\begin{align*}
\left(\dfrac{2}{\lambda}I + L^{T}L\right)u =&\,  L^{T}b - L^{T}\dfrac{\mu}{\lambda}.\\
\left(\dfrac{2}{\lambda}I + L^{T}L\right)u =&\,  L^{T}\left(b - \dfrac{\mu}{\lambda}\right).\\
u =&\, \left(\dfrac{2}{\lambda}I + L^{T}L\right)^{-1}L^{T}\left(b - \dfrac{\mu}{\lambda}\right).
\end{align*}
The dual variables have the following gradients,
\begin{align*}
\nabla_{\mu}\Lg =&\, Lu - b.\\
\nabla_{\lambda}\Lg =&\, \dfrac{1}{2}\langle Lu - b,Lu - b \rangle.\\
\end{align*}
And forming the gradient descent algorithm with $\eta$, results in
\begin{algorithm}[H]\caption{Gradient Descent w.r.t. $\mu$}\begin{algorithmic}
\WHILE{not converged}
\STATE{$\mu_{k+1} \gets \mu_{k} - \eta\cdot(Lu - b)$}
\ENDWHILE
\end{algorithmic}\end{algorithm}
\begin{algorithm}[H]\caption{Gradient Descent w.r.t. $\lambda$}\begin{algorithmic}
\WHILE{not converged}
\vspace{.3em}
\STATE{$\lambda_{k+1} \gets \lambda_{k} - \dfrac{\eta}{2}\cdot\langle Lu - b,Lu - b \rangle$}
\ENDWHILE
\end{algorithmic}\end{algorithm}
Lastly, to retain the non-negativity of the solution, we project the current solution onto $\R_{+}$.
This is accomplished by setting any negative values in the solution to 0 (thresholding).
We shall denote this $\mathcal{P}_{+}(\cdot)$.
Forming the full augmented Lagrangian scheme, we now have the complete $\ell^{2}$ derivation.
\begin{algorithm}[H]\caption{Full Augmented Lagrangian Scheme ($\ell^{2}$)}\begin{algorithmic}
\WHILE{not converged}
\vspace{.4em}
\STATE{$v_{k+1} \gets \left(\dfrac{2}{\lambda_{k}}I + L^{T}L\right)^{-1}L^{T}\left(b - \dfrac{\mu_{k}}{\lambda_{k}}\right)$}
\vspace{.4em}
\STATE{$u_{k+1} \gets \mathcal{P}_{+}(v_{k+1})$}
\STATE{$\mu_{k+1} \gets \mu_{k} - \eta\cdot(Lu_{k+1} - b)$}
\vspace{.4em}
\STATE{$\lambda_{k+1} \gets \lambda_{k} - \dfrac{\eta}{2}\cdot\langle Lu_{k+1} - b,Lu_{k+1} - b \rangle$}
\ENDWHILE
\end{algorithmic}\end{algorithm}

% --------------------------
% Numerical Examples
% --------------------------
\section{Numerical Examples}
The following numerical tests are intended to illustrate our methods by scaling frames generated from  Gaussian distributions. In particular, throughout this section, we identified  (random) frames $\Phi$ with (full rank) random $N\times M$  matrices whose elements are i.i.d., drawn from a Gaussian distribution with zero mean and unit variance.

%%characterize the space of scalable frames for Gaussian random matrices.
%We define Gaussian random frames to be analysis frames $\Phi$ that span $\R^{N}$ and whose elements are i.i.d., drawn from a Gaussian distribution with zero mean and unit variance.

The first set of figures are intended to give a representation of how the scaling affects frames in $\R^{2}$.
A number of Gaussian random frames are generated in MatLab, and a scaling process is performed by solving one of the optimization problems above (the specific program used is noted under the figures).
The Gaussian frame is first normalized to be on the unit circle.
The (blue$\backslash$circle) vectors correspond to the original frame vectors, and the (red$\backslash$triangle) vectors  represent the resulting scaled frame.

\begin{figure}[H]
\centering
\includegraphics[scale=.28]{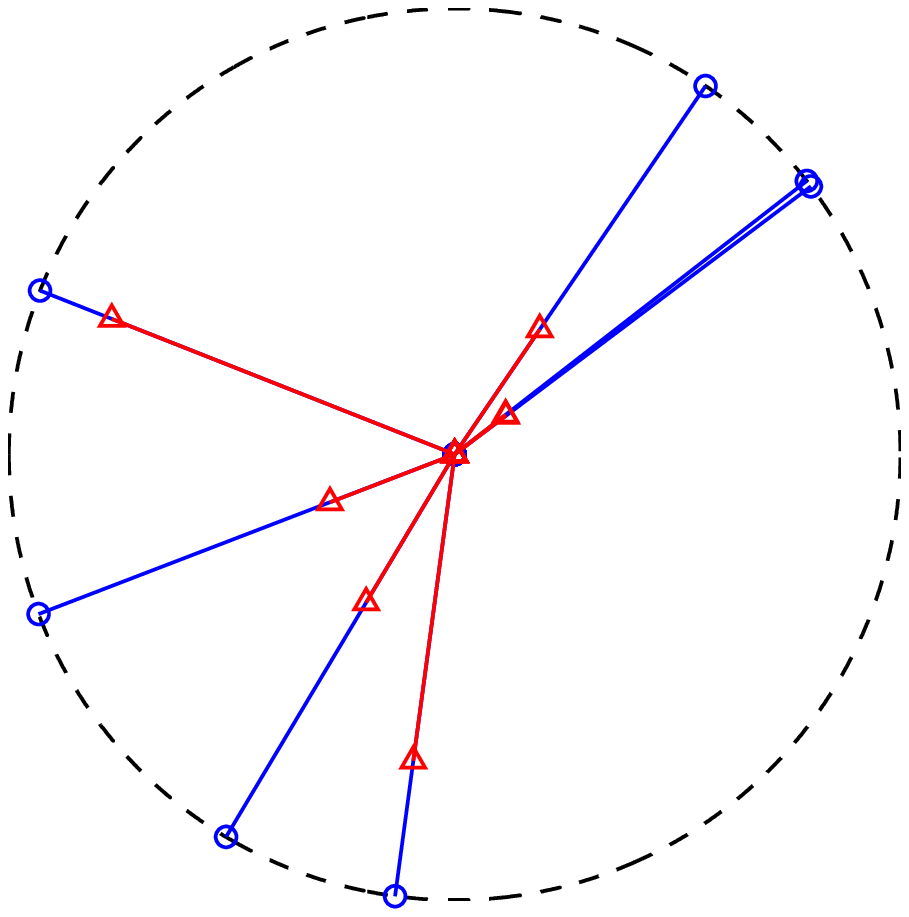}
\includegraphics[scale=.28]{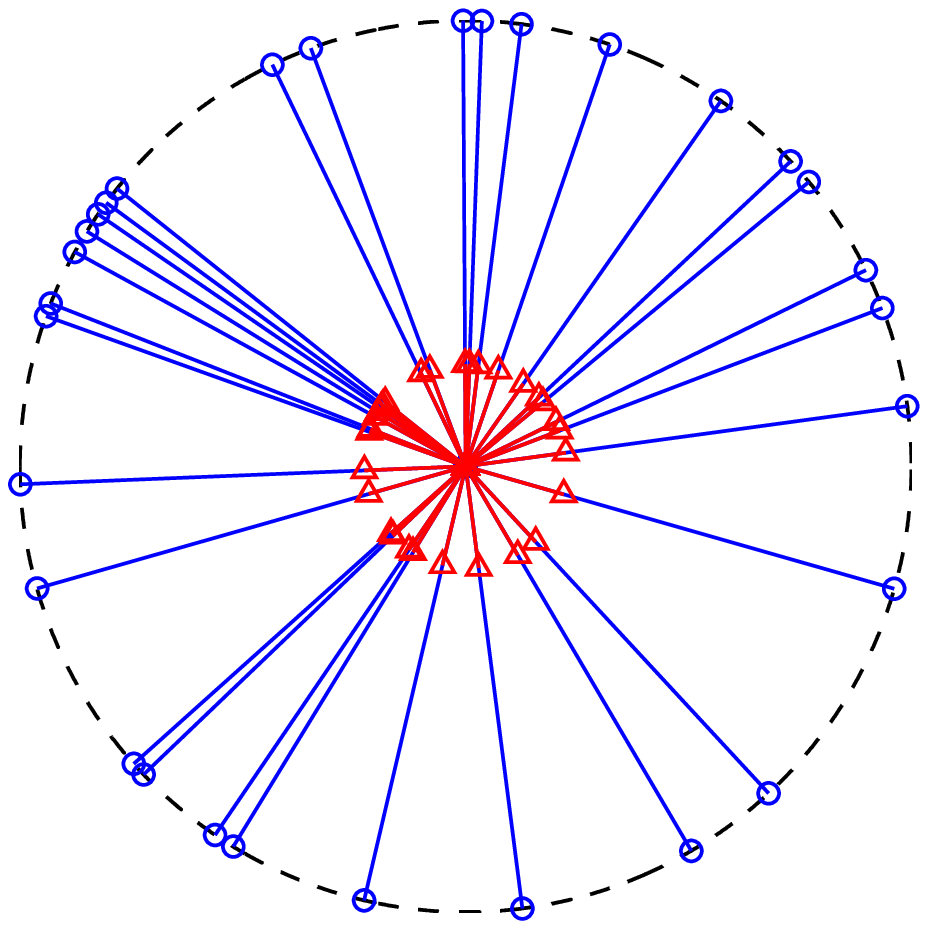}
\caption{These examples display the effect of scaling frames in $\R^2$. The frames are sized $M=7$ (Left) and $M=30$ (Right), and were scaled using the Augmented Lagrangian Scheme.
The left figure shows that scalings favor isolated frame elements.
The right figure shows that as the frame elements fill the space, the scalings become more normalized.}
\label{fig1}
\end{figure}

\begin{figure}[H]
\centering
\includegraphics[scale=.28]{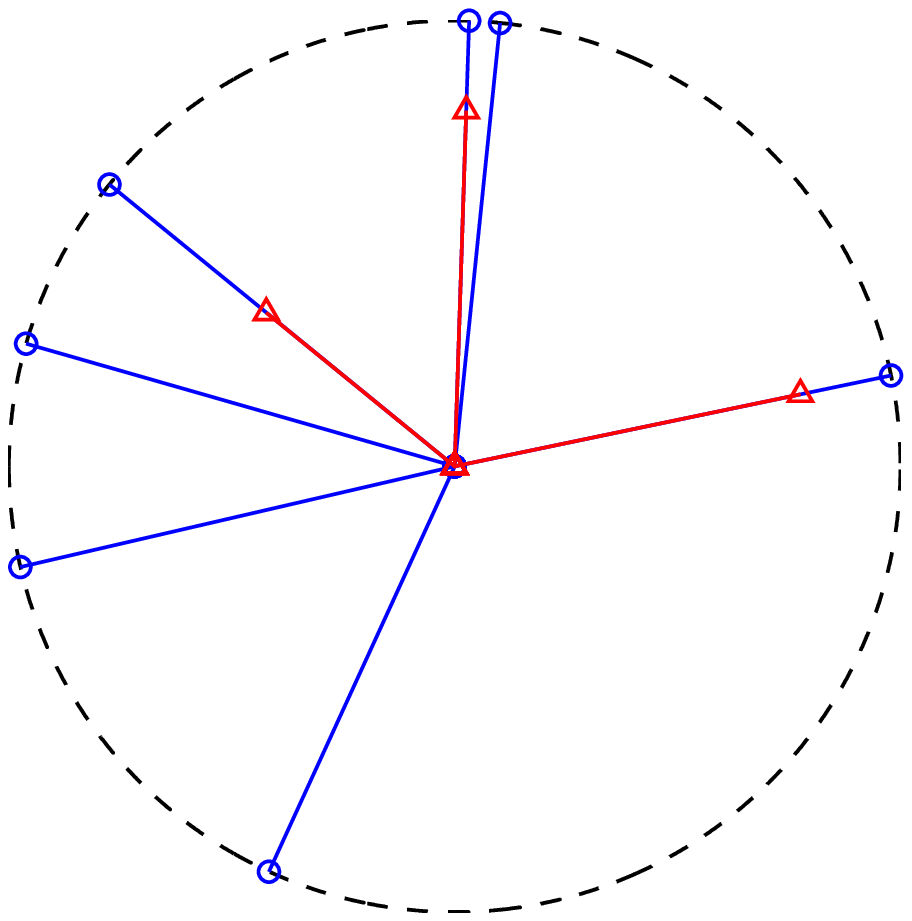}
\includegraphics[scale=.28]{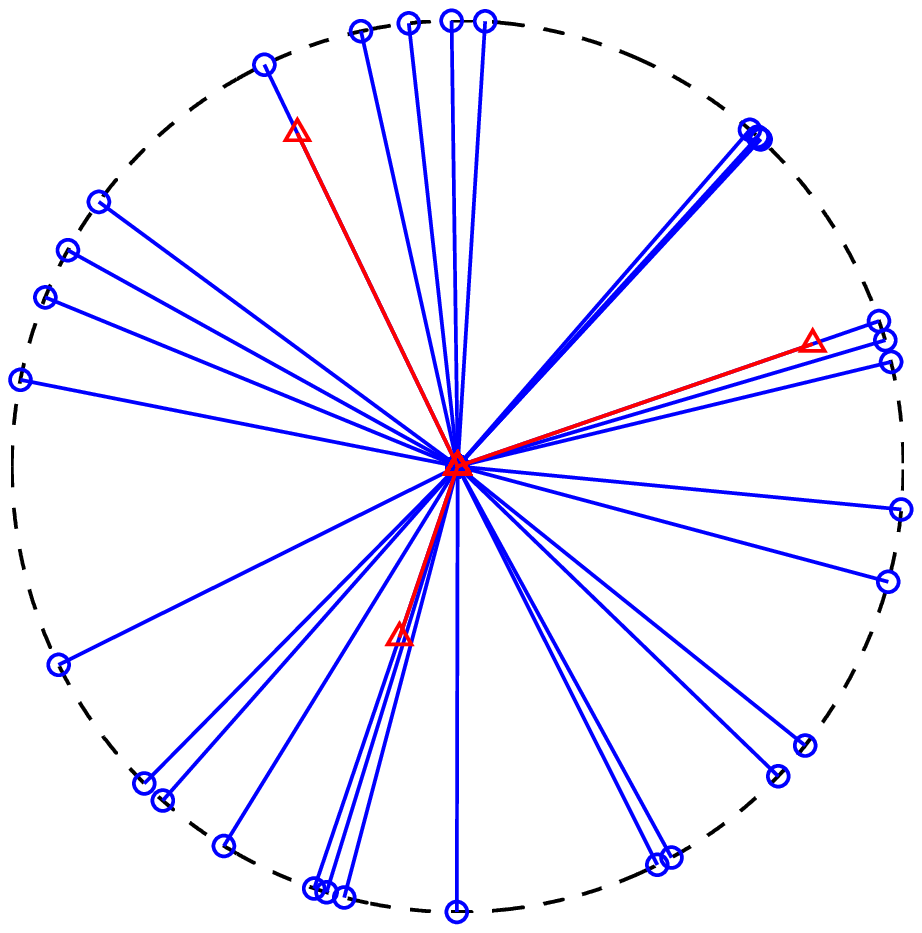}
\caption{These examples illustrate scalable frames with a small number of positive weights. The frames are sized $M=7$ (Left) and $M=30$ (Right), and were scaled using linear programming formulation $\mathcal{P}_{1}$ (more specifically, the Simplex algorithm).
These two example show that for frames of low (Left) and high (Right) redundancy, sparse solutions are possible and seemingly unrelated to the number of frame elements.}
\label{fig2}
\end{figure}

\begin{figure}[H]
\centering
\includegraphics[scale=.28]{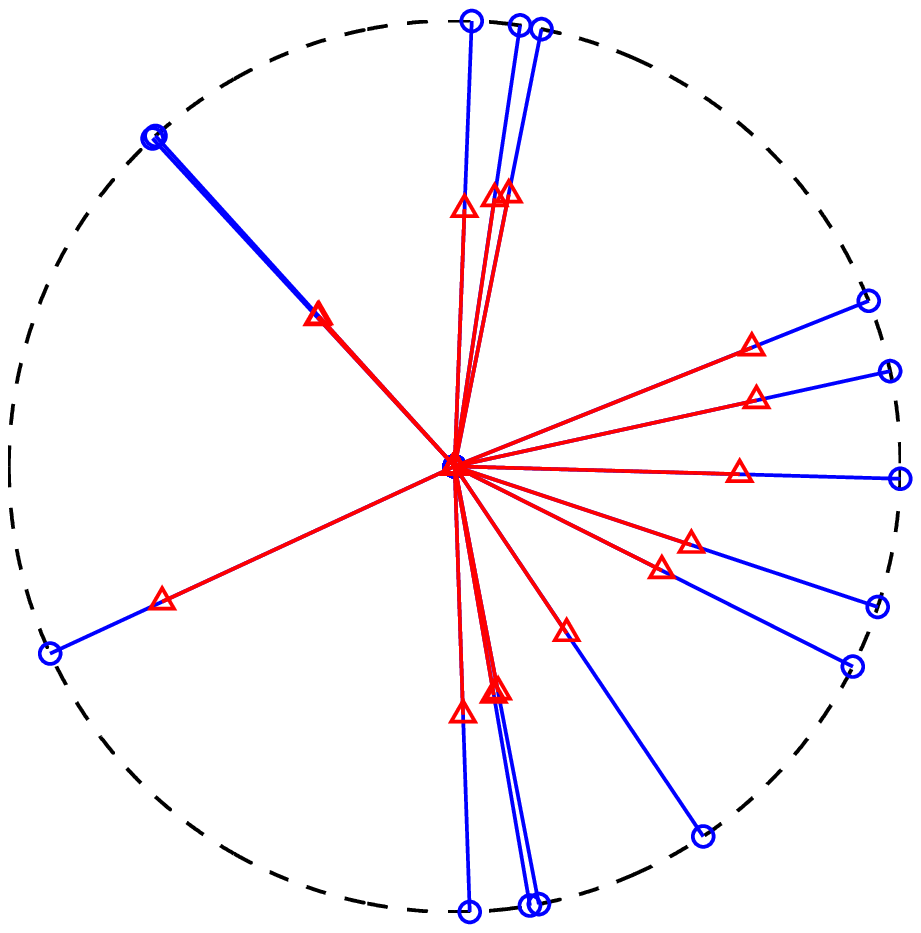}
\includegraphics[scale=.28]{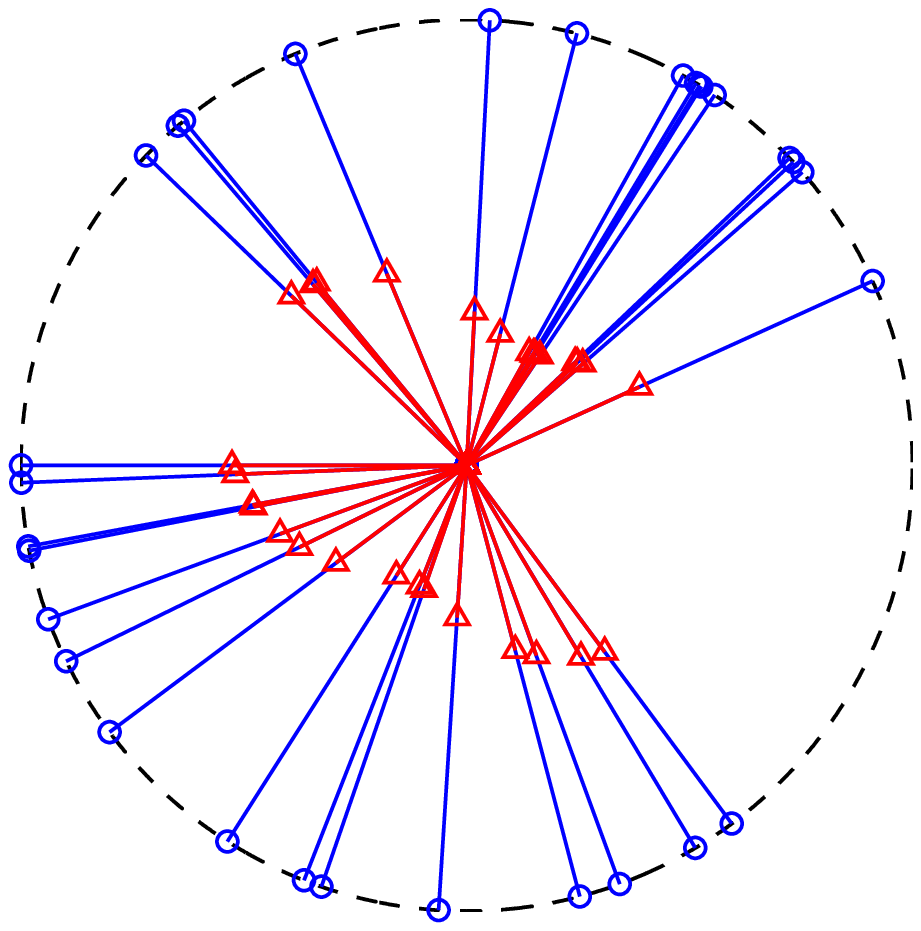}
\caption{These frames show full scaling results from the log-barrier method.
The frames are sized $M=15$ (Left) and $M=20$ (Right), and as was mentioned in figure \ref{fig1}, the scalings favor isolated frame elements.}
\label{fig3}
\end{figure}

The next tables illustrate  the sparsity that is achieved in scaling a frame. That is, using the linear program, we  present the average number of non-zero frame elements retained over 100 trials.
Our data seem to suggest the existence of  a minimum number of frame elements required to perform a scaling, and this number seems to  depend only on the dimension of the underlying space. This phenomenon  should be compared to the estimates on the probability that a frame of $M$ vectors in $R^N$  is scalable  that were obtained in \cite[Theorem 4.9]{MoS2014}. 

\begin{table}[H]
\centering
\begin{tabular}{|c|c|c|c|c|c|c|c|c|}
\multicolumn{9}{c}{Sparsity Test Results [Gaussian Frames]}\\
\hline
$N\backslash M$ &   3 &   4 &   5 &  10 &  20 &  30 &  40 &  50 \\\hline
2               & 3 & 3 & 3 & 3 & 3 & 3 & 3 & 3 \\
3               & - & - & - & 6.01 & 6 & 6 & 6 & 6 \\
4               & - & - & - & 10.1 & 10.12 & 10.1 & 10 & 10 \\
5               & - & - & - & - & 15.08 & 15.12 & 15.11 & 15.05 \\
\hline
\end{tabular}
\caption{The average number of frame elements retained after scaling using the linear program formulation.
Entries with a ``-" imply that the proportion of scalable frames in the space is too small for practical testing.}
\label{tab1}
\end{table}

\begin{table}[H]
\centering
\begin{tabular}{|c|c|c|c|c|c|c|}
\multicolumn{7}{c}{Sparsity Test Results [Gaussian Frames]}\\
\hline
$N\backslash M$  &  150 &  200 & 250 & 500 & 750 & 1000\\\hline
10               & 56.06 & 56.02 & 55.72 & 55.57 & 55.66 &  55.6\\
15               & - & - & 123.76 & 123.98 & 123.37 &  123.1\\
20               & - & - & - & 217.6 & 218.6 &  219.45\\
25               & - & - & - & - & - & 338.67\\
\hline
\end{tabular}
\caption{The average number of frame elements retained after scaling using the linear program formulation.
Entries with a ``-" imply that the proportion of scalable frames in the space is too small for practical testing.}
\label{tab2}
\end{table}

Observe that the results presented in tables \ref{tab1} and \ref{tab2} show an interesting trend. The average number of elements required to scale a frame appears to be $$d+1=\frac{(N-1)(N+2)}{2}+1 = \dfrac{N(N+1)}{2}.$$
The linear system being solved during the Simplex method has dimensions $d+1\times M$, and attempts to find a non-negative solution in $\R^{M}$.
It seems unlikely that this solution can be found using less than $d+1$ frame elements.

The final test presents the proportion of scalable Gaussian frames of a given size over 100 trials.
For testing, the number of frame vectors is determined   by the dimension of the underlying space.
For a frame in $\R^N$, the number of frame elements used ranges from $N+1$ to $4N^2$ (e.g. for $N=2$, the number of frame elements range from $M=3$ to $M=16$).
A Gaussian frame is generated of the required sizes and a scaling is attempted.
This is performed over a hundred trials, and the proportion of frames that were scalable was retained.

For each $N$, a plot of the proportions across frame size $M$ is presented.
To display these plots in a single figure, the independent variable $M$ is scaled to lie in the range (0,1).

\begin{figure}[H]
\centering
\includegraphics[scale=.6]{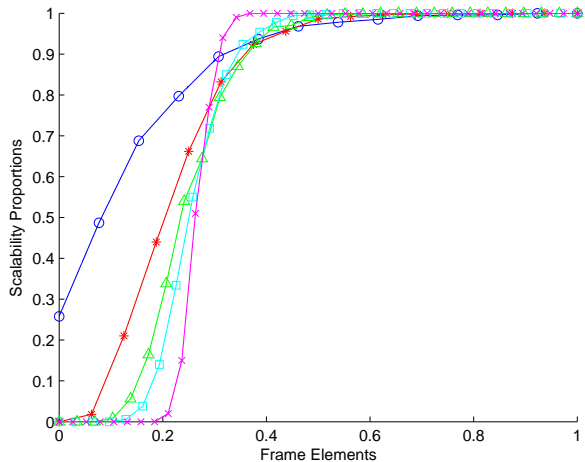}
\caption{Each graph in this figure gives the proportion of scalable frames generated after 100 trials.
The sizes of the frames range from $N+1$ to $4N^2$.
To fit the graphs in a single figure, the range of each figure is scaled to be from 0 to 1.
The frame dimensions are $N=2$ (Blue$\backslash$Circle), $N=3$ (Red$\backslash$Star), $N=4$ (Green$\backslash$Triangle), $N=5$ (Cyan$\backslash$Box), and $N=10$ (Magenta$\backslash$x).}
\label{fig4}
\end{figure}

%-------------------
% Acknowledgments
%-------------------
\section{Acknowledgments}
Chae Clark would like to thank the Norbert Wiener Center for Harmonic Analysis and Applications for its support during this research. Kasso Okoudjou was partially supported  by a RASA from the Graduate School of UMCP and by a grant from the Simons Foundation ($\# 319197$ to Kasso Okoudjou).

%-------------------
% References
%-------------------
\nocite{MoS2014,FF2012,SF2013}
\nocite{SFaCG2013,ADbBP1998,ItLO1997}
\nocite{CO2004,OUS1991,MC2012}
\nocite{COwSIN2011,DbLP2005}
\bibliography{references}

\begin{thebibliography}{10}

\bibitem{COwSIN2011}
F.~Bach, R.~Jenatton, J.~Mairal, and G.~Obozinski.
\newblock Convex optimization with sparsity-inducing norms.
\newblock {\em Optimization for Machine Learning}, pages 19--53, 2011.

\bibitem{ItLO1997}
D.~Bertsimas and J.~Tsitsiklis.
\newblock Introduction to linear optimization.
\newblock 1997.

\bibitem{CO2004}
S.~P. Boyd and L.~V.
\newblock {\em Convex optimization}.
\newblock Cambridge university press, 2004.

\bibitem{DbLP2005}
E.~J. Candes and T.~Tao.
\newblock Decoding by linear programming.
\newblock {\em Information Theory, IEEE Transactions on}, 51(12):4203--4215,
  2005.

\bibitem{FF2012}
P.~Casazza and G.~Kutyniok.
\newblock {\em Finite frames}.
\newblock Springer, 2012.

\bibitem{ADbBP1998}
S.~S. Chen, D.~Donoho, and M.~Saunders.
\newblock Atomic decomposition by basis pursuit.
\newblock {\em SIAM journal on scientific computing}, 20(1):33--61, 1998.

\bibitem{MoS2014}
X.~Chen, G.~Kutyniok, K.~Okoudjou, F.~Philipp, and R.~Wang.
\newblock Measures of scalability.
\newblock {\em arXiv preprint arXiv:1406.2137}, 2014.

\bibitem{MC2012}
G.~Golub and C.~V.~L.
\newblock {\em Matrix computations}, volume~3.
\newblock JHU Press, 2012.

\bibitem{SFaCG2013}
G.~Kutyniok, K.~Okoudjou, and F.~Philipp.
\newblock {Scalable Frames and Convex Geometry}.
\newblock {\em Contemp. Math. 626 (2014), 19-32}.

\bibitem{SF2013}
G.~Kutyniok, K.~Okoudjou, F.~Philipp, and E.~Tuley.
\newblock Scalable frames.
\newblock {\em Linear Algebra and its Applications}, 438(5):2225 -- 2238, 2013.

\bibitem{OUS1991}
W.R. Madych.
\newblock On underdetermined systems.
\newblock In G.~Golub and P.~Dooren, editors, {\em Numerical Linear Algebra,
  Digital Signal Processing and Parallel Algorithms}, volume~70 of {\em NATO
  ASI Series}, pages 541--545. Springer Berlin Heidelberg, 1991.

\end{thebibliography}
\bibliographystyle{plain}
\end{document}